\newtheorem{teo}{Theorem}
\newtheorem{Lemma}[teo]{Lemma}
\newtheorem{Cor}[teo]{Corollary}
 \DeclareMathOperator{\spann}{span}
\newcommand{\er}{\mathbb{R}}
\newcommand{\en}{\mathbb{N}}
\newcommand{\zet}{\mathbb{Z}} 
\newcommand{\qu}{\mathbb{Q}}
\newcommand{\ce}{\mathbb{C}}
\newcommand{\norm}[1]{\left\Vert#1\right\Vert}
\newcommand{\abs}[1]{\left\vert#1\right\vert}
\date{\today}
\author{Alessandro Vignati}
\title[Nonseparable amenable]{An algebra whose subalgebras are characterized by density}
\address[Alessandro Vignati]{York University, Department of Mathematics and Statistics, 4700 Keele St., Toronto, Ontario, Canada M3J 1P3}
\email{ale.vignati@gmail.com}
\subjclass[2000]{	46L05, 47L40}%
\keywords{amenable Banach algebra, Luzin gap, nonseparable algebras, unitarizable group representation}
\begin{document}

\begin{abstract}
We refine a construction of Choi, Farah and Ozawa to build a nonseparable amenable operator algebra $\mathcal A\subseteq\ell_\infty(M_2)$  whose nonseparable subalgebras, including $\mathcal A$, are not isomorphic to a $C^*$-algebra. This is done using a Luzin gap and a uniformly bounded group representation.

Next, we study additional properties of $\mathcal A$ and of its separable subalgebras, related to the Kadison Kastler metric.
\end{abstract}
\maketitle
\section{Introduction}
We focus on the problem of whether an amenable operator algebra (i.e., a Banach subalgebra of $\mathcal B(H)$) is necessarily isomorphic to a $C^*$-algebra. Recently this longstanding problem was solved in \cite{CFO13} by giving a nonseparable counterexample. The question of whether a separable counterexample can be provided is still open, although partial results were obtained: it was shown in \cite{gifford2006} that a separable counterexample cannot be a subalgebra of the compact operators. It has also been proved (see \cite{marcouxpopov2013}) that every abelian amenable operator algebra is similar to a $C^*$-algebra.

We construct a nonseparable amenable operator algebra $\mathcal A$ with the property that none of its nonseparable subalgebras is isomorphic to a $C^*$-algebra, and yet $\mathcal A$ is  an inductive limit of separable subalgebras each of which is isomorphic to a $C^*$-algebra. This is mainly motivated by the construction in \cite{CFO13}, where it is not necessary to use the full power of the set theoretical tool involved, a particular family of subsets of $\en$ known as a Luzin gap.

The main technique we use consists of taking a uniformly bounded representation $\pi$ of an uncountable abelian group $G$ in a corona algebra $\mathcal C$ with quotient map $Q$, and by considering the algebra $\mathcal A=Q^{-1}(\overline\spann\pi(G))$ as our example. The definition of uniformly bounded representation will be given in section \ref{section1}. This representation has an even more striking (although easier to prove) property: for a subgroup $H$ of $G$, the restriction of $\pi$ to $H$ is unitarizable if and only if $H$ is countable. In terms of the early version of \cite{CFO13} (see \cite{FO13}), the first bounded cohomology group $H^1_b(H,\mathcal C)$ is trivial if and only if $H$ is countable. Similar phenomena occurring at the least uncountable cardinal $\aleph_1$, as well as their connection to cohomology, are well-known (see \cite{Talayco95}). 

The next part of the paper is motivated by a question of Luis Santiago. In Theorem \ref{thethesis2}, we show that there exists a $C^*$-algebra $C$ such that for every $\epsilon>0$ there is an amenable operator algebra $\mathcal A_\epsilon$ that is $\epsilon$-close to $C$ in the Kadison-Kastler metric (see \cite{KadisonKastler72} for basic definitions) and that is not isomorphic to a $C^*$-algebra. All considered algebras are separably representable.

We should notice that these techniques cannot be used to provide a separable counterexample. In fact, whenever $G$ is an amenable group and $\pi$ is a uniformly bounded representation in a corona of a $\sigma$-unital algebra with quotient map $Q$, $Q^{-1}(\overline\spann\pi(G))$ is an amenable operator algebra. Although, thanks to a model theoretical property carried by coronas of $\sigma$-unital algebras and known as countable degree-1 saturation  (see \cite{farah2011countable}), if $G$ is countable the algebra $Q^{-1}(\overline\spann\pi(G))$ is automatically isomorphic to a $C^*$-algebra (\cite[Theorem 8]{CFO13}) and, applying the solution to the Kadison similarity problem for nuclear $C^*$-algebras (see \cite{pisier01}), even similar to a $C^*$-algebra.

The author would like to thank Ilijas Farah, Luis Santiago, Stuart White and the anonymous referees for the impressive number of suggestions received.
\section{The main construction}\label{section1}
The main Theorem that we are going to prove is the following.

\begin{teo}\label{thethesis}There is a nonseparable amenable operator algebra $\mathcal A$ such that there is no nonseparable algebra $\mathcal B\subseteq\mathcal A$ that is isomorphic to a $C^*$-algebra.
\end{teo}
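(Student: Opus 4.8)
The plan is to realize $\mathcal A$ as $Q^{-1}(\overline{\spann}\,\pi(G))$ for a carefully chosen uncountable abelian group $G$ and a uniformly bounded representation $\pi$ into a corona algebra $\mathcal C = \mathcal M(\mathcal I)/\mathcal I$ with quotient map $Q$, where $\mathcal I$ is built from $\ell_\infty(M_2)$. Let me sketch how I would proceed.

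First I would set up the combinatorial backbone. The natural choice for $G$ is an uncountable group indexed by a Luzin gap: recall that a Luzin gap is an uncountable family of pairs of subsets of $\en$ that are pairwise "almost disjoint in a rigid way," so that no uncountable subfamily can be separated by a single set. I would take $G$ to be the direct sum $\bigoplus_{\alpha<\omega_1} \zet/2$ (or an analogous torsion group) indexed by the gap, and define $\pi$ on each generator using the $2\times 2$ unitaries/rotations living in the coordinates of $\ell_\infty(M_2)$ dictated by the two sets in each Luzin pair. The point of the Luzin structure is to make $\pi$ uniformly bounded — so that $\overline{\spann}\,\pi(G)$, and hence $\mathcal A$, is amenable — while simultaneously obstructing unitarizability on uncountable subgroups.

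Second, I would prove the two structural facts I expect to be stated and proved in the section: \emph{(i)} $\pi$ is uniformly bounded, so by the general principle mentioned in the introduction (for $G$ abelian, hence amenable, $Q^{-1}(\overline{\spann}\,\pi(G))$ is an amenable operator algebra) $\mathcal A$ is amenable and nonseparable; and \emph{(ii)} the restriction of $\pi$ to a subgroup $H$ is unitarizable if and only if $H$ is countable. Part \emph{(ii)} is the technical heart: the countable case follows from countable degree-1 saturation of the corona (lifting a uniformly bounded representation of a countable group to a genuinely bounded, unitarizable one), while the uncountable case uses the rigidity of the Luzin gap to show that no single invertible element can conjugate $\pi\restriction H$ to a $*$-representation.

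Third comes the deduction of the theorem itself. Suppose toward a contradiction that $\mathcal B\subseteq\mathcal A$ is nonseparable and isomorphic to a $C^*$-algebra. Since $\mathcal A$ sits in a corona via $Q$, I would push $\mathcal B$ down to $Q(\mathcal B)\subseteq\overline{\spann}\,\pi(G)$ and extract from a nonseparable $\mathcal B$ an \emph{uncountable} subgroup $H\leq G$ whose representation $\pi\restriction H$ is captured inside (a corner of) $\mathcal B$. A $C^*$-algebra isomorphism equips $\mathcal B$ with an involution making every bounded representation it contains unitarizable after a similarity; this would force $\pi\restriction H$ to be unitarizable, contradicting \emph{(ii)} for uncountable $H$. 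The main obstacle will be precisely this last extraction step: ensuring that a nonseparable subalgebra $\mathcal B$ must genuinely "see" an uncountable piece of the group representation rather than only countably many generators together with ideal elements. I would handle this by a counting/separability argument — a $C^*$-structure on $\mathcal B$ yields self-adjoint elements and a norm-dense selfadjoint subset, and nonseparability of $\mathcal B$ forces uncountably many independent coordinates from the Luzin gap to appear — together with the observation that $\mathcal A / Q^{-1}(0) \cong \overline{\spann}\,\pi(G)$ transports nonseparability of $\mathcal B$ to nonseparability of its image, hence to an uncountable index set, reducing everything to the failure of unitarizability in \emph{(ii)}.
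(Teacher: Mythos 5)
Your construction of $\mathcal A$ and your structural facts \emph{(i)} and \emph{(ii)} match the paper: its Lemma \ref{subgroups} is precisely the equivalence ``$\pi\restriction H$ is unitarizable iff $H$ is countable,'' together with the $C^*$-algebra and bounded-cohomology reformulations. The gap is in your third step, and it is exactly the obstacle you yourself flag: from a general nonseparable subalgebra $\mathcal B\subseteq\mathcal A$ one cannot extract an uncountable subgroup $H\leq G$ such that $\pi\restriction H$ is ``captured'' by $\mathcal B$. Nonseparability of $Q(\mathcal B)$ does force uncountably many coordinates $\alpha$ to appear, with coefficients bounded away from zero, in the expansions $Q(a)=c_0+\sum_k c_k\pi(\{\alpha_k\})$ of elements of $\mathcal B$; but the elements carrying those coordinates are \emph{linear combinations} of the $\pi(\{\alpha\})$'s, not group elements, and no counting or involution argument converts them into a copy of $\pi(H)$ for an uncountable $H$. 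So the proposed reduction ``to the failure of unitarizability in \emph{(ii)}'' is not available: statement \emph{(ii)} is an obstruction for group elements only, and by itself it is too weak to rule out a $C^*$-structure on a general nonseparable subalgebra. The paper is explicit on this point: Lemma \ref{subgroups} settles only subalgebras of the special form $Q^{-1}(\overline\spann\pi(H))$, and the proof of the theorem must treat (and in fact supersedes) that case by a different argument.

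What that argument requires, and what is missing from your plan, is a quantitative, linear-combination version of the obstruction: Lemma \ref{compactness} shows that for $s\neq t$, matrices $M_{\alpha,\beta,s}=\alpha s_s+\beta I$ and $M_{\gamma,\delta,t}$, with $\abs{\alpha},\abs{\gamma}$ bounded below and the similarity norm-bounded, cannot both be brought uniformly close to unitaries by a single invertible. With this in hand the paper proceeds as follows: given an isomorphism $\Phi\colon\mathcal B\to D$ onto a $C^*$-algebra, the group $\Phi^{-1}(\mathcal U(D))$ is a uniformly bounded group of invertibles inside $\ell_\infty(\en,M_2)$, a finite von Neumann algebra, so by Vasilescu's theorem one invertible $u$ conjugates it to unitaries; since $D$ is generated by its unitaries, the set $\{a\in\mathcal B\mid uau^{-1}\text{ is unitary}\}$ is nonseparable, and one extracts uncountably many pairwise $\epsilon$-separated such elements $a_i$, expands each $Q(a_i)$ over the basis $\{\pi(\{\alpha\})\}$, stabilizes rational approximations of the coefficients by a $\Delta$-system plus pigeonhole argument to find a slot $\overline k$ with coefficient bounded below and pairwise distinct indices $\alpha_{i,\overline k}$, and finally invokes condition 2 of Lemma \ref{luzin} to produce a single coordinate $m\in A_{\alpha_{i,\overline k}}\cap A_{\alpha_{j,\overline k}}$ at which $(a_i)_m$ and $(a_j)_m$ are close to matrices $M_{e,\beta,x}$ with $x$-values near two distinct complete accumulation points; Lemma \ref{compactness} then yields the contradiction. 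In short, your steps \emph{(i)}--\emph{(ii)} correctly reconstruct the scaffolding, but the theorem needs an element-level rather than subgroup-level obstruction, and without it the final deduction does not go through.
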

 The Kadison-Kastler distance of two subalgebras $A,B\subseteq\mathcal B(H)$ is defined as \[d_K(A,B)=\max\{\sup_{x\in A_1}\inf_{y\in B_1}\norm{x-y},\sup_{x\in B_1}\inf_{y\in A_1}\norm{x-y}\}\] where $A_1$ and $B_1$ are the sets of elements of norm $1$ in $A$ and $B$ respectively.

Looking at the properties of $\mathcal A$ as in Theorem \ref{thethesis} we can formulate the following:
 \begin{teo}\label{thethesis2}There is a $C^*$-algebra $C$ such that for any $\epsilon>0$ there is a nonseparable amenable algebra $\mathcal A_\epsilon$ such that no nonseparable subalgebras of $\mathcal A_\epsilon$ is isomorphic to a $C^*$-algebra and  $d_K(\mathcal A_\epsilon,C)<\epsilon$.
\end{teo}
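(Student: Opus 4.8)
The plan is to obtain the family $\mathcal{A}_\epsilon$ by deforming the single object behind Theorem \ref{thethesis}, introducing a parameter that measures how far the representation is from being unitary. Recall that $\mathcal{A}=Q^{-1}(\overline{\spann}\pi(G))$ is built from a uniformly bounded, non-unitarizable representation $\pi$ of the uncountable abelian group $G$ in the corona $\mathcal{C}$, whose non-unitarizability on uncountable subgroups is governed by a bounded cocycle $b$ that is not a coboundary (i.e. $H^1_b(H,\mathcal{C})\neq 0$ for uncountable $H$, as noted in the introduction). Writing $\pi$ in the triangular form $\pi(g)=\left(\begin{smallmatrix}\lambda(g)&b(g)\\0&\lambda(g)\end{smallmatrix}\right)$ over the diagonal unitary representation $\lambda$, I would define, for each $\epsilon>0$, a representation $\pi_\epsilon$ with the cocycle rescaled by a factor $t(\epsilon)\to 0$, so that $\sup_g\norm{\pi_\epsilon(g)}\to 1$, and set $\mathcal{A}_\epsilon=Q^{-1}(\overline{\spann}\pi_\epsilon(G))$. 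The limiting representation $\pi_0(g)=\lambda(g)\oplus\lambda(g)$ is unitary, so $C=Q^{-1}(\overline{\spann}\pi_0(G))$ is a genuine $C^*$-algebra (a commutative corona $C^*$-subalgebra pulled back along $Q$), and this is the algebra $C$ of the statement.

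First I would check that each $\mathcal{A}_\epsilon$ still satisfies the conclusion of Theorem \ref{thethesis}. This should be immediate from the proof of that theorem applied to $\pi_\epsilon$ in place of $\pi$: rescaling a bounded cocycle by a nonzero factor preserves both its boundedness and the property of not being a coboundary on any uncountable subgroup, so $\pi_\epsilon|_H$ stays non-unitarizable for every uncountable $H\le G$, whence no nonseparable subalgebra of $\mathcal{A}_\epsilon$ is isomorphic to a $C^*$-algebra. Since all the algebras involved are subalgebras of $\ell_\infty(M_2)\subseteq\mathcal{B}(H)$ for a fixed separable $H$, the Kadison-Kastler distance $d_K(\mathcal{A}_\epsilon,C)$ is well defined, and it remains to estimate it.

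The heart of the matter is the estimate $d_K(\mathcal{A}_\epsilon,C)\to 0$. Both $\mathcal{A}_\epsilon$ and $C$ contain the ideal $\ker Q$ and agree on it, so I would first reduce the computation to comparing $\overline{\spann}\pi_\epsilon(G)$ with $\overline{\spann}\pi_0(G)$ inside $\mathcal{C}$. For a norm-one $X\in\mathcal{A}_\epsilon$, writing its image coordinatewise as $X_n=\left(\begin{smallmatrix}D_n&t(\epsilon)B_n\\0&D_n\end{smallmatrix}\right)$ in $M_2$ and compressing to the diagonal produces a candidate $Y\in C$ for which $\norm{X-Y}$ is controlled by the size of the off-diagonal corners $t(\epsilon)B_n$. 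The obvious compression estimate only gives $\norm{t(\epsilon)B_n}\le\norm{X}=1$, which is far too weak; the real task is to show that this quantity is $O(t(\epsilon))$ uniformly over the unit ball, that is, to pass from closeness of the generators $\pi_\epsilon(g)$ to unitaries up to closeness of the whole generated algebra.

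This uniform control is the main obstacle. The naive bound fails because a combination $\sum c_g\pi_\epsilon(g)$ may keep a large off-diagonal corner while its diagonal part $\sum c_g\lambda(g)$ nearly cancels, and $\sum\abs{c_g}$ is unbounded on the unit ball. To defeat this I would exploit the Luzin gap together with the finite-dimensionality of the fibres $M_2$: a Luzin gap is precisely an obstruction to cancelling the two families simultaneously, so the plan is to argue that a norm-one element cannot make its diagonal part small at one coordinate without pushing its norm above $1$ at another, forcing the off-diagonal corner of every norm-one element to be bounded by a fixed multiple of $t(\epsilon)$. Running this estimate through both halves of the definition of $d_K$, and then choosing $t(\epsilon)$ so small that the resulting bound lies below $\epsilon$, would complete the argument.
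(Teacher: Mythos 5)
Your architecture is the same as the paper's, even if you arrived at it independently: rescaling the corner entries $x_\alpha\mapsto t(\epsilon)x_\alpha$ is exactly the paper's device of choosing the parameter family inside a shrinking interval around $0$; your $C=Q^{-1}(\overline\spann\pi_0(G))$ (corner entries $0$, so the generators become the commuting self-adjoint unitaries with value $\mathrm{diag}(1,-1)$ on $A_\alpha$) is the paper's $\mathcal A_{\overline x}$ with $x_\alpha\equiv 0$; and what you call ``the heart of the matter'' is precisely the paper's Lemma \ref{teo2} ($\abs{x_\alpha-y_\alpha}<\epsilon$ for all $\alpha$ implies $d_K(\mathcal A_{\overline x},\mathcal A_{\overline y})<4\epsilon$). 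Two bookkeeping corrections: the triangular form $\pi(g)=\left(\begin{smallmatrix}\lambda(g)&b(g)\\0&\lambda(g)\end{smallmatrix}\right)$ with \emph{equal} diagonal entries is impossible for $G=\bigoplus_{\aleph_1}\zet/2\zet$, since $\pi(g)^2=I$ forces $2\lambda(g)b(g)=0$, i.e.\ $b\equiv 0$; the paper's generators have diagonal $(1,-1)$, which is what permits a nonzero corner of order two. Also, since Theorem \ref{thethesis} concerns arbitrary nonseparable subalgebras and not just subalgebras coming from subgroups, the cocycle-rescaling remark only covers Lemma \ref{subgroups}; as you say, one must rerun the whole proof of Theorem \ref{thethesis} for the parameters $t(\epsilon)x_\alpha$, which works because that proof only needs the parameters to form an uncountable injective family in $[0,1]$.

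The genuine gap is in the uniform estimate, and the tool you nominate for it is the wrong one. The Luzin condition (condition 2 of Lemma \ref{luzin}) plays no role in the distance estimate; it is needed only for non-unitarizability. What closes the gap is condition 1 (almost disjointness) together with the spectral shape of the generators, and it gives a direct, same-coordinate argument rather than a cross-coordinate contradiction: (i) if a finite combination $c_0I+\sum_{1\leq k\leq n}c_k\pi_\epsilon(\{\alpha_k\})$ has corona norm at most $1$, then each \emph{individual} $\abs{c_k}\leq 1$, because at any of the infinitely many coordinates $m$ lying in $A_{\alpha_k}$ and in no other $A_{\alpha_j}$ the matrix equals $c_ks+cI$ with $s$ of eigenvalues $\pm1$ and $c=c_0+\sum_{j\neq k}c_j$, so its norm is at least $\max\{\abs{c+c_k},\abs{c-c_k}\}\geq\abs{c_k}$; (ii) at every such coordinate the off-diagonal corner is $c_k\,t(\epsilon)x_{\alpha_k}$, hence of size at most $t(\epsilon)$, and by almost disjointness all but finitely many coordinates are of this type, the exceptional ones being absorbed by a $c_0$ perturbation. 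In other words, you never need $\sum_k\abs{c_k}$, only $\max_k\abs{c_k}$, because in the sup norm a perturbation of the generators touches at most one generator per coordinate. Note finally that diagonal compression handles only one half of $d_K$: to approximate elements of $C$ by elements of $\mathcal A_\epsilon$ you cannot ``decompress'', and you are pushed into exactly the generator-swap argument of Lemma \ref{teo2} — expand, truncate to a finite sum with coefficients bounded as in (i), leave the element untouched on the finite overlap region, swap the corner entries beyond it, and renormalize.
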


Let $\ell_\infty(\en,M_2)$ be the unital $C^{*}$-algebra of bounded sequences in $M_2(\ce)$ and let \[c_0(\en,M_2)=\{(x_n)\in\ell_\infty(\en,M_2)\mid \lim_n\norm{x_n}=0\}.\] We have that $c_0(\en,M_2)$ is a two-sided closed ideal in $\ell_\infty(\en,M_2)$, hence it is automatically self-adjoint. Let $\mathcal C(\en,M_2)$ be the quotient and $Q$ be the quotient map. We will write $a\sim_\epsilon b$ for $\norm{a-b}<\epsilon$ when $a$ and $b$ sit in the same normed algebra. We should point out that this is not an equivalence relation.

If $A$ is a unital $C^*$-algebra, a function $\pi\colon G\to A$ is a \emph{uniformly bounded representation} if $\pi(gh)=\pi(g)\pi(h)$, $\pi(g)$ is invertible for all $g,h\in G$ and $\norm{\pi}=\sup_g\norm{\pi(g)}<\infty$.

For $x,\alpha,\beta\in\ce$ let \[M_{\alpha,\beta,x}=\alpha\left(\begin{array}{cc}
1 & 0 \\
x & -1 \\\end{array}\right)+\beta\cdot I.\]
\begin{Lemma}
\label{compactness}
Fix $s\neq t\in[0,1]$ and $K_1,K_2\in\er^+$. Then there is $C=C(s,t,K_1,K_2)>0$ such that \[d(uM_{\alpha,\beta,s}u^{-1},\mathcal U)+d(uM_{\gamma,\delta,t}u^{-1},\mathcal U)>C\] whenever $2\geq \norm{\alpha},\norm{\gamma}\geq K_2$, $\norm{u},\norm{u^{-1}}\leq K_1$ and $\beta,\delta\in\ce$.
\end{Lemma}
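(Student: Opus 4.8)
The name of the lemma, \emph{compactness}, points to a proof by contradiction combined with a compactness argument; the genuine content is a short piece of $2\times 2$ linear algebra that I would extract in the limit. First I would record the relevant eigenstructure. Writing $N_x=\begin{pmatrix}1&0\\x&-1\end{pmatrix}$, one checks $N_x^2=I$, so $N_x$ is an involution with eigenvalues $\pm1$: the eigenvector for $-1$ is $e=(0,1)$, \emph{independently of $x$}, while the eigenvector for $+1$ is $f_x=(2,x)$. Consequently $M_{\alpha,\beta,x}=\alpha N_x+\beta I$ has these same eigenvectors, with eigenvalues $\beta-\alpha$ on $e$ and $\beta+\alpha$ on $f_x$, and these two eigenvalues are distinct precisely when $\alpha\neq0$.

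Next the compactness step. Suppose the statement fails; then there are parameters $\alpha_n,\beta_n,\gamma_n,\delta_n$ and invertibles $u_n$ meeting all the constraints for which the sum of the two distances tends to $0$. I would first check that $(\beta_n)$ and $(\delta_n)$ stay bounded: since $d(A,\mathcal U)\to0$ forces $\norm A\le1+d(A,\mathcal U)$ to stay bounded, and since $\norm{M_{\alpha_n,\beta_n,s}}\le\norm{u_n^{-1}}\,\norm{u_nM_{\alpha_n,\beta_n,s}u_n^{-1}}\,\norm{u_n}\le K_1^2\,\norm{u_nM_{\alpha_n,\beta_n,s}u_n^{-1}}$, while $\abs{\beta_n}=\tfrac12\abs{\operatorname{tr}M_{\alpha_n,\beta_n,s}}\le\norm{M_{\alpha_n,\beta_n,s}}$, the sequence $(\beta_n)$ is bounded, and likewise $(\delta_n)$. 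Now every parameter ranges over a compact set: the annulus $K_2\le\abs{\cdot}\le2$ for $\alpha,\gamma$, a closed disc for $\beta,\delta$, and the set $\{u:\norm u,\norm{u^{-1}}\le K_1\}$, which is closed and bounded in $M_2$ and hence compact. Passing to a subsequence, everything converges to $\alpha_\infty,\beta_\infty,\gamma_\infty,\delta_\infty,u_\infty$ with $u_\infty$ invertible and $\abs{\alpha_\infty},\abs{\gamma_\infty}\ge K_2>0$. As $d(\cdot,\mathcal U)$ is continuous and $\mathcal U$ is closed, in the limit both $u_\infty M_{\alpha_\infty,\beta_\infty,s}u_\infty^{-1}$ and $u_\infty M_{\gamma_\infty,\delta_\infty,t}u_\infty^{-1}$ are honest unitaries.

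Finally the linear-algebra contradiction, which is the heart of the matter. Each limit matrix is unitary, hence normal, and because $\alpha_\infty,\gamma_\infty\neq0$ its two eigenvalues are distinct, so its eigenvectors are orthogonal. The eigenvectors of $u_\infty M_{\alpha_\infty,\beta_\infty,s}u_\infty^{-1}$ are $u_\infty e$ and $u_\infty f_s$, and those of $u_\infty M_{\gamma_\infty,\delta_\infty,t}u_\infty^{-1}$ are $u_\infty e$ and $u_\infty f_t$. Thus both $u_\infty f_s$ and $u_\infty f_t$ are orthogonal to the single nonzero vector $u_\infty e$ in $\ce^2$, forcing $u_\infty f_s$ and $u_\infty f_t$ to be linearly dependent. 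But $f_s=(2,s)$ and $f_t=(2,t)$ are linearly independent when $s\neq t$, and $u_\infty$ is invertible, so $u_\infty f_s$ and $u_\infty f_t$ are independent — a contradiction. Hence the required $C=C(s,t,K_1,K_2)>0$ exists.

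I expect the only delicate point to be the bookkeeping that keeps $(\beta_n),(\delta_n)$ bounded, so that the relevant domain is genuinely compact; the shared-eigenvector argument itself is very short once the eigenstructure of $N_x$ is noted, and it is exactly there that the hypothesis $s\neq t$ (and $\alpha,\gamma\neq0$, guaranteed by $\abs{\alpha},\abs\gamma\ge K_2$) enters.
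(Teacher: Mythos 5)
Your proof is correct, and while it shares the paper's overall skeleton (a qualitative rigidity statement about simultaneous unitarization, promoted to a uniform bound $C$ by compactness), both halves are carried out quite differently. For the rigidity step, the paper assumes $u$ positive via polar decomposition, writes $u^{-2}=\left(\begin{smallmatrix} a & b\\ \overline b & c\end{smallmatrix}\right)$, and extracts from the identity $M_{\alpha,\beta,x}u^{-2}M_{\alpha,\beta,x}^*=u^{-2}$ the explicit relation $ax=2b$, so that $x=2b/a$ is determined by $u$ alone and no single $u$ can unitarize two matrices with distinct parameters; you instead observe that $M_{\alpha,\beta,x}$ has the $x$-independent eigenvector $e=(0,1)$ and the $x$-dependent eigenvector $f_x=(2,x)$, with distinct eigenvalues when $\alpha\neq 0$, so that unitarity (hence normality) of both conjugates would force $u f_s$ and $u f_t$ into the one-dimensional orthogonal complement of $ue$, contradicting invertibility of $u$ and $s\neq t$. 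The two arguments are equivalent in content (the paper's relation $ax=2b$ is an entrywise shadow of your orthogonality condition), but yours is more conceptual and the paper's more explicit. For the compactness step, your treatment is actually \emph{more} complete than the paper's: the paper disposes of it in one sentence, quantifying only $\alpha,\gamma$ over the compact annulus, while the stated lemma allows $\beta,\delta$ to range over all of $\ce$, so the parameter space is not compact as written; your trace bound $\abs{\beta_n}=\tfrac12\abs{\operatorname{tr}M_{\alpha_n,\beta_n,s}}\leq K_1^2(1+o(1))$ is exactly the missing bookkeeping that legitimizes the sequential compactness argument. The only thing the paper's route buys that yours does not is an explicit invariant ($x=2b/a$ read off from $u^{-2}$), which could in principle be used to make $C$ quantitative; your argument, relying on a limit, is inherently non-effective.
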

\begin{proof}
Suppose $x,y\in[0,1]$, with $x\neq y$. Let $\alpha,\gamma\neq 0$ and $\beta,\delta\in\ce$. Then it is impossible to simultaneously unitarize $M_{\alpha,\beta,x}$ and $M_{\gamma,\delta,y}$.

To see this let $u$ be an invertible matrix that unitarizes $M_{\alpha,\beta,x}$.
We may assume that $u$ is positive, by the polar decomposition, hence so are $u^{-1}$ and $u^{-2}$ as well. Let $u^{-2}=\left(\begin{array}{cc}
a & b \\
\overline b & c \\\end{array}\right)$. Note that, since the determinant of a positive invertible matrix is positive, we have $a,c\neq 0$. By positivity we have that \[(uM_{\alpha,\beta,x}u^{-1})^*=(u^{-1})^*M_{\alpha,\beta,x}^*u^*=u^{-1}M_{\alpha,\beta,x}^*u,\] hence we have that \[uM_{\alpha,\beta,x}u^{-2}M_{\alpha,\beta,x}^*u=I\] and in particular $M_{\alpha,\beta,x}u^{-2}M_{\alpha,\beta,x}^*=u^{-2}$, that means \[\left(\begin{array}{cc}
\alpha+\beta & 0 \\
\alpha x & \beta-\alpha \\\end{array}\right)\left(\begin{array}{cc}
a & b \\
\overline b & c \\\end{array}\right)\left(\begin{array}{cc}
\overline{\alpha+\beta} & \overline\alpha x \\
0 & \overline{\beta-\alpha} \\\end{array}\right)=\left(\begin{array}{cc}
a & b \\
\overline b & c \\\end{array}\right).\]
Doing the calculation we have \[\left(\begin{array}{cc}
a(\alpha+\beta) & b(\alpha+\beta) \\
a\alpha x+\overline b(\beta-\alpha) & b\alpha x+c(\beta-\alpha) \\\end{array}\right)\left(\begin{array}{cc}
\overline{\alpha+\beta} & \overline\alpha x \\
0 & \overline{\beta-\alpha} \\\end{array}\right)=\left(\begin{array}{cc}
a & b \\
\overline b & c \\\end{array}\right)\] and looking at the first row multiplied by the second column of the latter we have
\[a(\alpha+\beta)\overline\alpha x+b(\alpha+\beta)(\overline{\beta-\alpha})=b\] that is, multiplying both left and right side for $\overline{(\alpha+\beta)}$,  \[a\overline\alpha x=b(\overline\beta+\overline\alpha)-b(\overline\beta-\overline\alpha)=2b\overline\alpha.\] Since $\alpha\neq 0$, we have $ax=2b$, that means that $x$ is unique, once the unitary is fixed.

The thesis is obtained since $\alpha$ and $\gamma$ are quantified over the compact space $\{(x,y)\in\ce^2\mid K_2\leq\abs{x},\abs{y}\leq 2\}$.
\end{proof}
We will write $M_x$ for $M_{1,0,x}$ and $C(x,y,K_1)$ for $C(x,y,K_1,1)$.

To obtain the thesis of Theorem \ref{thethesis} we are also going to use the  full power obtained from the construction of a Luzin gap (see \cite{Luz47}). A proof of the existence of such an object can be found in \cite[Appendix B]{CFO13}.
\begin{Lemma}\label{luzin}
There is a family $\{A_\alpha\mid\alpha\in\aleph_1\}$ of infinite subsets of natural numbers such that 
\begin{enumerate}
\item\label{cond1} if $\alpha\neq\beta$ then $A_\alpha\cap A_\beta$ is finite.
\item\label{cond2} for any $\alpha\in\aleph_1$ and $m\in\en$ the set $\{\beta<\alpha\mid A_\alpha\cap A_\beta\subseteq m\}$ is finite.
\end{enumerate}
\end{Lemma}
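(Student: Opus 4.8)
The plan is to build the family by transfinite recursion on $\alpha\in\aleph_1$, keeping conditions~\ref{cond1} and~\ref{cond2} as a standing inductive hypothesis at every stage. The observation that makes such a recursion go through is that both conditions constrain a set $A_\alpha$ only in terms of sets $A_\beta$ with $\beta<\alpha$: condition~\ref{cond1} is symmetric, so when a new set is introduced it suffices to verify it against those already built, and condition~\ref{cond2} is literally a statement about indices below the outer one. Consequently, adding a new set never damages what has already been arranged for smaller indices, and it is enough to describe a single successful step, starting from $A_0=\en$ (say).

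So fix $\alpha\in\aleph_1$ and suppose $\{A_\beta\mid\beta<\alpha\}$ has been constructed so as to satisfy~\ref{cond1} and~\ref{cond2}. If $\alpha$ is finite the step is immediate, so assume $\alpha$ is infinite and fix an enumeration $\{\beta_n\mid n\in\en\}$ of the ordinals below $\alpha$ in order type $\omega$, possible since $\alpha$ is countable. I would then select the elements $a_0<a_1<a_2<\cdots$ of the new set recursively, requiring at step $n$ that \[a_n\in A_{\beta_n}\setminus\bigcup_{i<n}A_{\beta_i},\qquad a_n\geq n,\qquad a_n>a_{n-1},\] and set $A_\alpha=\{a_n\mid n\in\en\}$. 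This choice is always available: by the inductive form of~\ref{cond1} each $A_{\beta_n}\cap A_{\beta_i}$ is finite for $i<n$, so $A_{\beta_n}\setminus\bigcup_{i<n}A_{\beta_i}$ is an infinite subset of $\en$ and in particular contains elements exceeding $\max\{n,a_{n-1}\}$.

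It remains to verify both clauses for the enlarged family, and this is where the two demands on $a_n$ pull in opposite directions yet are reconciled by the construction. For~\ref{cond1}, fix $k$: whenever $n>k$ we chose $a_n\notin A_{\beta_k}$, so $A_\alpha\cap A_{\beta_k}\subseteq\{a_0,\dots,a_k\}$ is finite. For~\ref{cond2}, fix $m\in\en$; since $a_n\in A_\alpha\cap A_{\beta_n}$ with $a_n\geq n$, every $n\geq m$ gives an element $a_n\geq m$ in the intersection, so $A_\alpha\cap A_{\beta_n}\not\subseteq m$, whence $\{n\mid A_\alpha\cap A_{\beta_n}\subseteq m\}\subseteq\{n\mid n<m\}$ is finite. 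I expect the only genuine subtlety to be exactly this tension: keeping each intersection $A_\alpha\cap A_{\beta_n}$ finite, for almost disjointness, while forcing these intersections to reach arbitrarily high, for~\ref{cond2}. The construction resolves it by placing a single large witness $a_n$ into $A_{\beta_n}$ at each step, drawn from the cofinite remainder $A_{\beta_n}\setminus\bigcup_{i<n}A_{\beta_i}$, so that one element simultaneously secures both requirements and the recursion closes.
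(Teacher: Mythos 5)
The paper does not actually prove Lemma~\ref{luzin}; it cites \cite[Appendix B]{CFO13}, and the construction there is exactly your diagonalization: at a countable infinite stage $\alpha$ one enumerates the previously built sets, picks $a_n$ in the $n$-th set avoiding the union of the earlier ones, with $a_n\geq n$, and takes $A_\alpha=\{a_n\mid n\in\en\}$. Your verification of both clauses at such stages is correct: membership $a_n\in A_{\beta_n}$ with $a_n\geq n$ gives clause~\ref{cond2}, and avoidance of $\bigcup_{i<n}A_{\beta_i}$ gives clause~\ref{cond1}. So the engine of your proof is right and is the same as in the cited source.

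There is, however, a genuine error in how you launch the recursion. First, $A_0=\en$ is inconsistent with clause~\ref{cond1}: for any later infinite $A_\alpha$ one has $A_\alpha\cap A_0=A_\alpha$, which is infinite, and moreover once $\en$ belongs to the family the sets $A_{\beta_n}\setminus\bigcup_{i<n}A_{\beta_i}$ you draw from become finite (indeed eventually empty), so the diagonalization itself breaks. Second, your claim that the finite stages are ``immediate'' under the standing hypothesis of clauses~\ref{cond1} and~\ref{cond2} alone is false: take $A_0$ the even numbers and $A_1$ the odd numbers; both clauses hold, yet no infinite $A_2$ almost disjoint from both exists, since any infinite subset of $\en$ meets the evens or the odds in an infinite set. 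The point is that clauses~\ref{cond1} and~\ref{cond2} do not by themselves guarantee extendability at finite stages, whereas at infinite stages they do, because there the new set is chosen from \emph{inside} the old ones, so it is irrelevant whether they cover $\en$. The repair is standard and small: dispose of all finite stages at once by letting $\{A_n\}_{n<\omega}$ be any partition of $\en$ into infinitely many infinite sets (clause~\ref{cond1} is trivial and clause~\ref{cond2} is vacuous there, as only finitely many indices lie below each $n$), and then run your diagonalization exactly as written for all $\omega\leq\alpha<\omega_1$. With that modification your argument is complete and coincides with the proof of \cite[Appendix B]{CFO13}.
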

A family $\{A_\alpha\}$ with properties \ref{cond1} and \ref{cond2} as in Lemma \ref{luzin} is a \emph{Luzin gap}.

Let $\{A_\alpha\}$ be a Luzin gap, $f\colon\mathcal P(\en)\to[0,1]$ be the canonical bijection onto the Cantor set and $x_\alpha=f(A_\alpha)$.
Let $s_\alpha=\left(\begin{array}{cc}
1 & 0 \\
x_\alpha & -1 \\\end{array}\right)\in M_2$ and $w_\alpha\in \ell_\infty(\en,M_2)$ be defined as $(w_\alpha)_n=s_\alpha$ if $n\in A_\alpha$ and $(w_\alpha)_n=I_{M_2}$ otherwise. We will write $\bar w_\alpha=Q(w_\alpha)$.
Let $G=\bigoplus_{\alpha\in\aleph_1}\zet/2\zet$. Note that we may identify $G$ with $[\aleph_1]^{<\aleph_0}$, the set of all finite subsets of $\aleph_1$, with the operation of symmetric difference (i.e., $ab=(a\cup b)\setminus (a\cap b)$). From now on we will talk about the elements of $G$ as finite subsets of $\aleph_1$, and we will consider $\{\{\alpha\}\}_{\alpha\in\aleph_1}$ as the standard basis for $G$.  Let $\pi\colon G\to\mathcal C(\en,M_2)$ be defined as $\pi(\{\alpha\})=\bar w_\alpha$ and $\pi(\emptyset)=I_{ C(\en,M_2)}$. We have that $\pi$ can be extended to a uniformly bounded representation setting $\pi(s)=\prod_{\alpha\in s}\pi(\{\alpha\})$, since for every $s\in[\aleph_1]^{<\aleph_0}$ we have that \[|\bigcup_{\alpha\neq\beta\in s}(A_\alpha\cap A_\beta)|<\aleph_0,\] and therefore $\norm{\pi(s)}\leq 2$.

Note that \[\pi(\{\alpha_1\})\pi(\{\alpha_2\})=\pi(\{\alpha_1\})+\pi(\{\alpha_2\})-I_{ C(\en,M_2)},\] and for the same reason, for $s=\{\alpha_1,\ldots,\alpha_N\}\in G$ we have \[\pi(s)=-(N-1)\cdot I_{ C(\en,M_2)}+\sum_{j=1}^N\pi(\{\alpha_j\}).\] 

Let $\mathcal A=Q^{-1}(\overline\spann\pi (G))$. We will show that $\mathcal A$ satisfies the conclusion of Theorem \ref{thethesis}. The structure of this algebra depends only on the $A_\alpha$'s and on $\overline x=\{x_\alpha\}_{\alpha\in\aleph_1}\subseteq[0,1]$ hence, fixing the Luzin gap once for all, we will refer to this algebra as $\mathcal A_{\overline x}$.

We will deal with two cases separately. The first case, that is proven in Lemma \ref{subgroups}, occurs when $\mathcal B$ is of the form $\mathcal B=Q^{-1}(\overline\spann \pi(H))$ for some uncountable subgroup $H\subseteq G$ and the second one treats subalgebras of $\mathcal A$ that are not of that form. Note that the proof for the second case also takes care of the first situation.

Recall that for a group $G$ and a Banach space $X$ the group $H_b^1(G,X)$ is the first bounded cohomology group, defined as the linear space of cocycle modulo inner cocycles (see \cite{monod2001continuous} for definitions and properties).
\begin{Lemma}\label{subgroups}
Let $G$ and $\pi$ be as before.
 For subgroup $H\subseteq G$ the following conditions are equivalent:
\begin{enumerate}
\item\label{cond11} $Q^{-1}(\overline\spann\pi(H))$ is isomorphic to a $C^*$-algebra
\item\label{cond12} $\pi\restriction H$ is unitarizable
\item\label{cond13} $H$ is countable.
\item\label{cond14} $H_b^1(H,\mathcal C(\en,M_2))=0$.
\end{enumerate}
\end{Lemma}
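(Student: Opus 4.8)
The plan is to establish the cycle $(3)\Rightarrow(2)\Rightarrow(1)$, the reverse $(2)\Rightarrow(3)$ (contrapositively), the cohomological reformulation $(2)\Leftrightarrow(4)$, and finally $(1)\Rightarrow(3)$. The implication $(2)\Rightarrow(3)$ is the heart of the matter and the only place where Lemma \ref{compactness} and the Luzin gap enter.

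For $(3)\Rightarrow(2)$, note that since $G=\bigoplus\zet/2\zet$ every finitely generated subgroup of $H$ is finite. For a finite $H_0\le H$ the Weyl unitary trick inside the C*-algebra $\mathcal C(\en,M_2)$ applies: with $\bar s=\frac1{\abs{H_0}}\sum_{h\in H_0}\pi(h)^*\pi(h)$ one has $\tfrac14\le\bar s\le4$ (because $\norm{\pi}\le2$ and $\pi(h)^{-1}=\pi(h)$), so $\bar u_0=\bar s^{1/2}$ satisfies $\norm{\bar u_0},\norm{\bar u_0^{-1}}\le2$ and unitarizes $\pi\restriction H_0$. Thus the type expressing ``there is invertible $\bar u$ with $\norm{\bar u},\norm{\bar u^{-1}}\le2$ and $\bar u\pi(g)\bar u^{-1}$ unitary for all $g\in H$'' is finitely satisfiable with uniform bounds; since $H$ is countable and $\mathcal C(\en,M_2)$ is countably degree-$1$ saturated (\cite{farah2011countable}), the type is realized, giving $(2)$ (this is essentially \cite[Theorem 8]{CFO13}). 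For $(2)\Rightarrow(1)$, lift the unitarizer $\bar u$ and $\bar u^{-1}$ to $u,v\in\ell_\infty(\en,M_2)$; then $uv-1,vu-1\in c_0(\en,M_2)$, so $u$ is invertible, conjugation by $u$ is a Banach-algebra automorphism of $\ell_\infty(\en,M_2)$ preserving $c_0(\en,M_2)$, and it carries $Q^{-1}(\overline\spann\pi(H))$ onto the preimage of a C*-subalgebra of the corona, which is a C*-algebra. For $(2)\Leftrightarrow(4)$ I would invoke the standard correspondence between unitarizability of a uniformly bounded representation and the vanishing of the class it determines in $H_b^1(H,\mathcal C(\en,M_2))$ (the coefficients carrying the conjugation action), see \cite{monod2001continuous}: a unitarizer is exactly an inner trivialization of the associated cocycle.

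Now $(2)\Rightarrow(3)$, in the form: if $H$ is uncountable then $\pi\restriction H$ is not unitarizable. Suppose for contradiction that invertible $\bar u$ with $\norm{\bar u},\norm{\bar u^{-1}}\le K_1$ unitarizes $\pi\restriction H$, with a lift $u=(u_n)$ satisfying $\norm{u_n},\norm{u_n^{-1}}\le K_1$ for all large $n$. The set $T=\bigcup_{h\in H}h\subseteq\aleph_1$ is uncountable, since a countable $T$ would force $H\subseteq[T]^{<\aleph_0}$ countable. For each $\alpha\in T$ fix $h_\alpha\in H$ with $\alpha\in h_\alpha$; by almost disjointness $(w_{h_\alpha})_n=s_\alpha=M_{x_\alpha}$ for all but finitely many $n\in A_\alpha$, and since $\bar u\pi(h_\alpha)\bar u^{-1}$ is unitary we get $d(u_nM_{x_\alpha}u_n^{-1},\mathcal U)\to0$ along $A_\alpha$. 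Hence for every $\epsilon>0$ there is $m_\alpha(\epsilon)\in\en$ with $d(u_nM_{x_\alpha}u_n^{-1},\mathcal U)<\epsilon$ whenever $n\in A_\alpha$ and $n\ge m_\alpha(\epsilon)$.

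The obstacle is that uncountably many $\alpha$ necessarily share arbitrarily long initial segments of the sets $A_\alpha$, so the values $x_\alpha$ accumulate and Lemma \ref{compactness} gives no \emph{uniform} lower bound $C(x_\alpha,x_\beta,K_1)$. I resolve this using that an uncountable set of distinct reals has a perfect, hence at least two-point, set of condensation points: pick condensation values $p<q$ of $\{x_\alpha:\alpha\in T\}$, set $\delta=(q-p)/3$, and let $T_p,T_q\subseteq T$ be the uncountable sets with $\abs{x_\alpha-p}<\delta$, resp.\ $\abs{x_\alpha-q}<\delta$, so that $\abs{x_\alpha-x_\beta}\ge\delta$ for $\alpha\in T_p$, $\beta\in T_q$. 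The compactness argument proving Lemma \ref{compactness} applies uniformly over separated pairs, giving $C_0:=\inf\{C(s,t,K_1):s,t\in[0,1],\,\abs{s-t}\ge\delta\}>0$. Put $\epsilon=C_0/3$. Pigeonholing the countably valued maps $\alpha\mapsto m_\alpha(\epsilon)$ on $T_p$ and on $T_q$ yields uncountable $T_p'\subseteq T_p$, $T_q'\subseteq T_q$ and a single $m$ with $m_\alpha(\epsilon)\le m$ for all $\alpha\in T_p'\cup T_q'$. Choose $\alpha\in T_p'$ lying above infinitely many elements of $T_q'$ (possible since both sets are unbounded in $\aleph_1$); by Luzin property \ref{cond2} only finitely many $\beta<\alpha$ have $A_\alpha\cap A_\beta\subseteq m$, so there are $\beta\in T_q'$ with $\beta<\alpha$ and $n\in A_\alpha\cap A_\beta$ with $n\ge m$. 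At this coordinate $d(u_nM_{x_\alpha}u_n^{-1},\mathcal U)+d(u_nM_{x_\beta}u_n^{-1},\mathcal U)<2\epsilon<C_0\le C(x_\alpha,x_\beta,K_1)$, contradicting Lemma \ref{compactness} applied with conjugator $u_n$, $s=x_\alpha$, $t=x_\beta$. This gives $(2)\Rightarrow(3)$.

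It remains to close the cycle with $(1)\Rightarrow(3)$. If $\Psi$ is a Banach-algebra isomorphism of $\mathcal B=Q^{-1}(\overline\spann\pi(H))$ onto a C*-algebra $D$, then $\Psi$ sends the ideal $c_0(\en,M_2)$ onto a closed ideal of $D$, so $D$ is an extension of the commutative C*-algebra $\overline\spann\pi(H)$ by the nuclear algebra $c_0(\en,M_2)$ and is therefore nuclear. By the solution of the Kadison similarity problem for nuclear C*-algebras (\cite{pisier01}) the bounded representation $\Psi^{-1}\colon D\to\mathcal B(H)$ is similar to a $*$-representation; showing that this similarity descends to the corona produces, coordinatewise, invertibles approximately unitarizing the matrices $M_{x_\alpha}$ along $A_\alpha$ exactly as above, and the condensation–pigeonhole–Luzin argument then applies verbatim. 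I expect this final passage — from an abstract C*-isomorphism to genuine coordinatewise approximate unitarizers — to be the subtlest point, and it is precisely the mechanism shared with the treatment of arbitrary (non group-form) subalgebras.
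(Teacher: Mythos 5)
Your implications (3)$\Rightarrow$(2), (2)$\Rightarrow$(1), (2)$\Leftrightarrow$(4) and, most importantly, (2)$\Rightarrow$(3) are sound, and your (2)$\Rightarrow$(3) is in fact cleaner than the paper's. By choosing, for each $\alpha$ in the uncountable support $T=\bigcup_{h\in H}h$, an element $h_\alpha\in H$ with $\alpha\in h_\alpha$, and using almost disjointness to see that a representative of $\pi(h_\alpha)$ equals $s_\alpha$ at all but finitely many coordinates of $A_\alpha$, you reduce an arbitrary uncountable subgroup to the ``singletons'' situation in one stroke; the paper instead treats separately the case where uncountably many singletons lie in $H$ and then the general case via the $\Delta$-system lemma (extracting disjoint sets $Z_\alpha\in H$ of constant size). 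Your uniform constant $C_0=\inf\{C(s,t,K_1):\abs{s-t}\geq\delta\}>0$ is also legitimate: the compactness proof of Lemma \ref{compactness} runs verbatim with $(s,t)$ ranging over the compact set of $\delta$-separated pairs, and it replaces the paper's step of approximating $s_\alpha$ by $s_{y_k}$ near the two accumulation points.

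The genuine gap is (1)$\Rightarrow$(3). Having proved (3)$\Rightarrow$(2)$\Rightarrow$(1) and (2)$\Rightarrow$(3), you still need condition (1) to imply one of the others, and your argument breaks down exactly where you flag it: the similarity theorem for nuclear $C^*$-algebras gives an invertible $T\in\mathcal B(H)$ --- a global operator on $\ell_2\otimes\ce^2$ --- conjugating $\Psi^{-1}$ to a $*$-representation, but such a $T$ need not respect the block-diagonal structure of $\ell_\infty(\en,M_2)$, so it yields no coordinatewise invertibles $u_n\in M_2$ of bounded norm to feed into Lemma \ref{compactness}; the condensation--pigeonhole--Luzin argument therefore cannot be applied ``verbatim,'' and nothing in your sketch descends $T$ to the corona. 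That descent is precisely the content of \cite[Lemma 2]{CFO13}, which is what the paper cites for (1)$\Leftrightarrow$(2). (Two smaller slips in the same passage: $\overline\spann\pi(H)$ is a commutative Banach algebra, not a $C^*$-algebra --- the $C^*$-algebra is $D/\Psi(c_0)$, commutative because it is Banach-isomorphic to $\overline\spann\pi(H)$ --- and nuclearity of $\Psi(c_0)$ needs amenability to be transferred across the Banach-algebra isomorphism before invoking the equivalence of amenability and nuclearity.) If you want to close the cycle without citing \cite[Lemma 2]{CFO13}, the paper's own proof of Theorem \ref{thethesis} shows the way: $\Psi^{-1}(\mathcal U(D))$ is a uniformly bounded group of invertibles inside $\ell_\infty(\en,M_2)$, a finite von Neumann algebra, so by \cite{Vasilescu} it is unitarized by an invertible $u\in\ell_\infty(\en,M_2)$, which is block diagonal. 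Then $u\mathcal Bu^{-1}$ is a $C^*$-algebra (the closed span of a group of unitaries), hence $Q(u)\,\overline\spann\pi(H)\,Q(u)^{-1}$ is a commutative $C^*$-algebra in which each $Q(u)\pi(h)Q(u)^{-1}$ squares to $1$; a square root of $1$ in a commutative $C^*$-algebra is a self-adjoint unitary, so $Q(u)$ unitarizes $\pi\restriction H$, giving (1)$\Rightarrow$(2).
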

\begin{proof}
\ref{cond11}$\iff$ \ref{cond12} is \cite[Lemma 2]{CFO13}, while \ref{cond13} implies \ref{cond12} is proved in \cite[Theorem 8]{CFO13}, and the equivalence of \ref{cond12} and \ref{cond14} is proved in \cite[Section 3]{FO13}. Assume that \ref{cond12} implies \ref{cond13} is false and fix $H$ uncountable subgroup of $G$ with $u\in\mathcal C(\en, M_2)$ that unitarizes $\pi\restriction H$.

We first analyze the special case where there are uncountably many $\alpha$ such that $\{\alpha\}\in H$. Denote $X_H=\{x_\alpha\mid \{\alpha\}\in H\}$ and take $y_1\neq y_2$ two complete accumulation points of $X_H$. We recall that for $X\subseteq [0,1]$ a complete accumulation point for $X$ is a point $x\in[0,1]$ such that $\forall\epsilon>0$ we have \[|X\cap(x-\epsilon,x+\epsilon)|=|X|.\]
Since the representation is unitarizable, for any sequence $\{u_n\}$ of invertibles that represents $u$ we have that, if $\{\alpha\}\in H$ then \[\lim_{n\in A_\alpha}d(u_ns_{\alpha}u_n^{-1},\mathcal U)=0\]
Replacing $u$ with $(uu^*)^{1/2}$, we can assume that $u$ is positive and, since $u$ is invertible we can consider $K>0$ such that $\frac{1}{K}\leq u\leq K$. Fix a sequence $u_n$ of positive and invertible elements such that $u_n$ represents $u$ and $\norm{u_n},\norm{u_n^{-1}}\leq K^2$ for all $n\in\en$.

Let $\epsilon>0$ be such that \[\epsilon<\min\{C(y_1,y_2,K^2),\abs{y_1-y_2}/2\}/4.\] Recall that $u$ unitarizes $\pi$, hence \[\lim_{n\in A_{\alpha}}d(u_ns_{\alpha}u_n^{-1},\mathcal U)=0.\]
Consider, for $k=1,2$, \[X_{k}=\{\alpha\in X_H\mid \abs{y_k-x_\alpha}<\epsilon/2\}\] and let $n_\alpha$ such that for all $n\geq n_\alpha$, if $n\in A_\alpha$, then \[d(u_ns_{\alpha}u_n^{-1},\mathcal U)<\epsilon/2.\] By a pigeonhole principle we can find, for $k=1,2$, $m_k\in\en$ and $Y_k\subseteq X_k$ with $Y_k$ uncountable and such that $\alpha\in Y_k\Rightarrow n_\alpha=m_k$. Let $N=\max\{m_1,m_2\}$ and take a countable subset $F\subseteq Y_1$ and $\gamma>\sup F$ such that $\gamma\in Y_2$. By condition \ref{cond2} of Lemma \ref{luzin} there are $m>N$ and $\alpha\in F$ such that $m\in A_\alpha\cap A_\gamma$, but then \[d(u_ms_{y_1}u_m^{-1},\mathcal U)\sim_{\epsilon/2} d(u_ms_{\alpha}u_m^{-1},\mathcal U)<\epsilon/2\] and \[d(u_ms_{y_2}u_m^{-1},\mathcal U)\sim_{\epsilon/2} d(u_ms_{\gamma}u_m^{-1},\mathcal U)<\epsilon/2,\]  contradicting the definition of $C(y_1,y_2,K^2)$.

We now consider the general case, where there are not necessarily uncountably many singletons in $H$. By the $\Delta$-system Lemma (see \cite[Theorem 2.1.6]{kunen:settheory}), for every uncountable $B\subseteq[\aleph_1]^{<\aleph_0}$, there is $B_1\subseteq B$ uncountable and $r\in [\aleph_1]^{<\aleph_0}$ such that $x\neq y\in B_1$ implies $x\cap y=r$. Such a subfamily is usually called a $\Delta$-system. Since $H$ is closed by symmetrical difference, we can find $n>1$ and $\{Z_\alpha\}_{\alpha\in\aleph_1}\subseteq H$ such that for $\alpha\neq\beta$ we have $Z_\alpha\cap Z_\beta=\emptyset$ and $|Z_\alpha|=n$.
For any $\alpha\in\aleph_1$ let $n_\alpha$ such that \[n_\alpha\geq \max\{A_i\cap A_j\mid i,j\in Z_\alpha\}.\] By  a cardinality argument we can say that there is $\overline n\in\en$ and an uncountable $Y\subseteq\aleph_1$ such that \[\alpha,\beta\in Y\Rightarrow \overline n= n_\alpha=n_\beta.\] Suppose now that $u$ unitarizes $\pi\restriction H$ and take a sequence $u_n$ representing $u$ as above, where each $u_n$ is invertible and positive. We have that \[\pi(Z_\alpha)=\prod_{i\in Z_\alpha}\pi(\{i\}).\] On the other hand we have that $\{A_i\}_{i\in Z_\alpha}$ are disjoint above $\overline n$ hence, for $n\geq\overline n$, we can repeat the argument from the first case, by uncountability of $Y$.
\end{proof}

\begin{proof}[Proof of Theorem \ref{thethesis}]
We need to prove Theorem \ref{thethesis} for a general (i.e., not of the form $Q^{-1}(\overline\spann\pi(H))$ for some uncountable subgroup H), nonseparable subalgebra of $\mathcal A$.

For the sake of obtaining a contradiction, let $\mathcal B\subseteq\mathcal A$ be a nonseparable unital subalgebra and suppose that $\mathcal B$ is isomorphic to a $C^*$-algebra. Let $\Phi\colon\mathcal B\to D$ be a Banach algebra isomorphism, where $D$ is a $C^*$-algebra. Since $\mathcal B$ is unital, so is $D$, and, being a $C^*$-algebra, is generated by its unitaries.  Consider $\Phi^{-1}(\mathcal U(D))$: this is a uniformly bounded subgroup of invertible elements of $\mathcal B$ and by the main result of \cite{Vasilescu} it is similar to a group of unitaries, via a $u\in\ell^\infty(M_2)$.
Therefore $u\mathcal Bu^{-1}$ is a $C^*$-algebra. Note that $u\mathcal Bu^{-1}$ is not necessarily equal to $D$, but it is isomorphic to it via $x\mapsto\Phi(u^{-1}xu)$.
Consider now the set $\{a\in\mathcal B\mid uau^{-1}\text{ is unitary}\}$. This set is nonseparable and, since the density character of $\mathcal B$ is $\aleph_1$, we can extract an uncountable set $\{a_i\}_{i\in\aleph_1}$ such that there is $\epsilon>0$ such that for all $i,j\in\aleph_1$ we have $Q(a_i)\nsim_\epsilon Q(a_j)$.

We have that, for all $i$, $Q(a_i)\in\overline{\spann}\{\{\pi(\{\alpha\}_{\alpha\in\aleph_1}\},1\}$, hence, in particular, for all $i$ there are increasing $\{\alpha_{i,k}\}_{1\leq k}\subseteq\aleph_1$ and $\{c_{i,k}\}_{0\leq k}\subseteq\ce$ such that \[Q(a_i)=c_{i,0}+\sum_{1\leq k}c_{i,k}\pi(\{\alpha_{i,k}\}).\]

Let $\epsilon>0$ be such that for $i,j\in\aleph_1$ we have $Q(a_i)\nsim_{8\epsilon} Q(a_j)$. We can find, for all $i\in\aleph_1$, a minimum $n=n(i,\epsilon)$ and $d_{i,k}\in\qu+\sqrt{-1}\qu$ such that \[Q(a_i)\sim_\epsilon d_{i,0}+\sum_{1\leq k\leq n}d_{i,k}\pi(\{\alpha_{i,k}\}).\] Note that we can assume that for all $j$ we have $n(i,\epsilon)=n(j,\epsilon)$. By countability of $\qu$ we can go to an uncountable subset, re-index it and obtain that for all $k\leq n$ and all $i,j\in\aleph_1$ we have $d_{i,k}=d_{j,k}$. Note that $c_{i,k}\sim_\epsilon d_{i,k}$. Apply the $\Delta$-system lemma in order to have an uncountable $B_1$ such that $\{\alpha_{i,0},\ldots,\alpha_{i,n}\}_{i\in B_1}$ forms a $\Delta$-system. From this, the fact that for all $i\in\aleph_1$ and $k\in\en$ we have $\alpha_{i,k}<\alpha_{k+1}$, together with $Q(a_i)\nsim_{8\epsilon} Q(a_j)$, implies that \[\exists \overline k (|d_{i,\overline k}|>2\epsilon\wedge\forall i,j\in B_1 (\alpha_{i,\overline k}\neq\alpha_{j,\overline k})).\]

 Take $y_1,y_2$ two complete accumulation points of $\{x_{\alpha_{i,\overline k}}\}_{i\in B_1}$ and $C=C(y_1,y_2,K_1,\epsilon/2)$ according to Lemma \ref{compactness}, where $K_1>\norm{u}+\norm{u^{-1}}$ and let $\delta=\min\{\frac{\abs{y_1-y_2}}{8},\frac{C}{4K_1}\}$.
For each $i\in B_1$ there is $N=n(i,\delta)$ and $e_{i,0},\ldots,e_{i,N}$ such that
\[Q(a_i)\sim_\delta e_{i,0}+\sum_{1\leq k\leq N}e_{i,k}\pi(\{\alpha_{i,k}\}).\] By minimality of the choice of $n(i,\delta)$ and $n(i,\epsilon)$ we have that \[n(i,\delta)\geq n(i,\epsilon)\geq\overline k.\] Note that we have $e_{i,k}\sim_\delta c_{i,k}\sim_\epsilon d_{i,k}$ since $\delta<\epsilon/2$, hence, since $\abs{d_{i,\overline k}}>2\epsilon$, we get that $\abs{e_{i,\overline k}}>\epsilon/2$ for all $i\in B_1$.

Let $b_i\in \mathcal \ell_\infty(\en,M_2)$ be defined as
 \[
(b_i)_m=\begin{cases}
e_{i,k}\left(\begin{array}{cc}
1 & 0 \\
x_{\alpha_{i,k}} & -1 \\\end{array}\right)+\displaystyle{\sum_{0\leq l\leq N, \, l\neq k}e_{i,l}\cdot I}&\text{ if }\exists! k\leq N (m\in A_{\alpha_{i,k}})\\
\displaystyle{\sum_{k\leq N}e_{i,k}\cdot I}&\text{ otherwise.}
\end{cases}.\]
Then 
\[Q(b_i)=e_{i,0}+\sum_{1\leq k\leq N}e_{i,k}\pi(\{\alpha_{i,k}\})\sim_\delta Q(a_i).\] 
Consider 
\[X_l=\{\alpha_{i,\overline k}\mid x_{\alpha_{i,\overline k}}\in (y_l+\delta/4,y_l-\delta/4)\},\,\,\, l=1,2.\] 
Both $X_1$ and $X_2$ are uncountable. For every $i$ such that $\alpha_{i,\overline k}\in X_l$ there is $M(i)$ such that $\forall M\geq M(i)$ we have $(a_i)_M\sim_\delta (b_i)_M$ and we can find, for $l=1,2$, $M_l\in\en$ and $Y_l\subseteq X_l$ uncountable such that \[\alpha_{i,\overline k},\alpha_{j,\overline k}\in Y_l\Rightarrow M(i)=M(j)=M_l.\] Take $F\subseteq \{i\mid \alpha_{i,\overline k}\in Y_1\}$ infinite and countable and $i>\sup F$ such that $\alpha_{i,\overline k}\in Y_2$. Then we have that there is $j\in F$ and an index $m>\max (M_1,M_2)$ such that \[m\in A_{\alpha_{i,\overline k}}\cap A_{\alpha_{j,\overline k}}\] by condition 2 of Lemma \ref{luzin}.

Recall that $\delta$ was chosen to be $\delta=\min\{\frac{\abs{y_1-y_2}}{8},\frac{C}{4K_1}\}$ and the latter condition implies that \[0=d(u_{m}(a_i)_mu^{-1}_m,\mathcal U)\sim_\delta d(u_m(b_i)_mu_m^{-1},\mathcal U)\sim_\delta d(u_mM_{e_{i,\overline k},\beta_1,y_2}u_m^{-1})\] for some $\beta_1$ and equivalently for $j$, $M_{e_{j,\overline k},\beta_2,y_1}$ and some $\beta_2$, contradicting the choice of $\delta$ in terms of $C=C(y_1,y_2,K_1,\epsilon/2)$ from Lemma \ref{compactness}.
\end{proof}

We will now focus on the proof of Theorem \ref{thethesis2}.
\begin{Lemma}\label{teo2}
Let $A_\alpha$ be a Luzin gap, $\{x_\alpha\}_{\alpha\in\aleph_1},\{y_\alpha\}_{\alpha\in\aleph_1}\subseteq[0,1]$ and $\mathcal A_{\overline x}$ and $\mathcal A_ {\overline y}$ be constructed as before as inverse images of an uniformly bounded representation of $\bigoplus_{\alpha\in\aleph_1}\zet/2\zet$ inside $\mathcal C(\en,M_2)$. If for all $\alpha$ we have $\abs{x_\alpha-y_{\alpha}}<\epsilon$ then \[d_K(\mathcal A_{\overline x},\mathcal A_{\overline y})<4\epsilon.\]
\end{Lemma}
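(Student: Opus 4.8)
<br>

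The plan is to show that the two algebras $\mathcal A_{\overline x}$ and $\mathcal A_{\overline y}$ are close in the Kadison--Kastler metric by exhibiting, for each element of norm $1$ in one algebra, a nearby element of norm at most $1$ in the other, with the distance controlled linearly by $\sup_\alpha|x_\alpha - y_\alpha| < \epsilon$. Since both algebras are of the form $Q^{-1}(\overline\spann\pi(G))$ for the \emph{same} group $G = \bigoplus_{\alpha\in\aleph_1}\zet/2\zet$ and the same Luzin gap $\{A_\alpha\}$, differing only in the parameters $x_\alpha$ versus $y_\alpha$, the natural strategy is to build an explicit map between them. The key observation is that both $\mathcal A_{\overline x}$ and $\mathcal A_{\overline y}$ contain the ideal $c_0(\en, M_2)$ (as $Q^{-1}$ of a subspace), so any element $a \in \mathcal A_{\overline x}$ can be written as $a = c + \sum_k c_k w^{\overline x}_\alpha + (\text{compact})$, where $w^{\overline x}_\alpha$ is built from $s_\alpha = M_{x_\alpha}$ and $w^{\overline y}_\alpha$ from $M_{y_\alpha}$.

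The central step is to estimate $\norm{s^{\overline x}_\alpha - s^{\overline y}_\alpha}$ at the level of the $2\times 2$ matrices. Since
\[
s^{\overline x}_\alpha - s^{\overline y}_\alpha = \left(\begin{array}{cc} 0 & 0 \\ x_\alpha - y_\alpha & 0 \end{array}\right),
\]
its operator norm is exactly $|x_\alpha - y_\alpha| < \epsilon$. Consequently $\norm{w^{\overline x}_\alpha - w^{\overline y}_\alpha}_{\ell_\infty(\en,M_2)} < \epsilon$, and passing to the quotient, $\norm{\bar w^{\overline x}_\alpha - \bar w^{\overline y}_\alpha} < \epsilon$. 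I would then define the obvious correspondence sending a representative $c_0 + \sum_{1\le k\le N} c_k \pi^{\overline x}(\{\alpha_k\})$ to $c_0 + \sum_{1\le k\le N} c_k \pi^{\overline y}(\{\alpha_k\})$ (extended via $c_0(\en,M_2)$), and bound the distance term by term.

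The main obstacle is that this correspondence need not be an algebra homomorphism or even a bounded linear map with small norm unless the estimates are handled carefully, because an arbitrary element of $\overline\spann\pi(G)$ is a \emph{limit} of finite linear combinations with potentially large coefficients, so I cannot naively bound $\sum_k |c_k| \cdot \epsilon$. To circumvent this I would work at the level of single elements of norm $1$ and use the structure of $\pi$: recall from the excerpt that $\pi(s) = -(N-1)I + \sum_{j=1}^N \pi(\{\alpha_j\})$, so a general element of $\overline\spann\pi(G)$, after truncating to a finite approximation, has the form of a compactly supported perturbation in which at each coordinate $n \in \en$ only finitely many $A_{\alpha_k}$ are active above some threshold. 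The key fact making the estimate uniform is that, by Luzin-gap property \ref{cond1}, for each fixed $n$ and any finite $s$, at most one $A_\alpha$ (for $\alpha$ in the relevant finite index set, above the finite exceptional set) contains $n$, so coordinatewise the perturbation $x_\alpha \to y_\alpha$ affects only one summand at a time, keeping the coordinatewise difference bounded by a fixed multiple of $\epsilon$ rather than by $N\epsilon$.

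Thus the final estimate would proceed coordinatewise: for $a \in (\mathcal A_{\overline x})_1$ I construct $b \in \mathcal A_{\overline y}$ by replacing each $M_{x_\alpha}$ block with the corresponding $M_{y_\alpha}$ block (and using the ideal $c_0$ to match up the compact parts), and show $\norm{a - b} < 4\epsilon$ by the triangle inequality, accounting for a factor from the coefficient bound $|c_k| \le \norm{a} = 1$ (after normalizing via the idempotent-like structure of $\pi(\{\alpha\})$), a factor from the matrix difference $\epsilon$, and the small constants absorbed by the $\sim$ approximations. After checking $\norm{b} \le 1$ (possibly after a harmless rescaling), the symmetric argument swapping the roles of $\overline x$ and $\overline y$ gives the other half of the $\max$ in the definition of $d_K$, yielding $d_K(\mathcal A_{\overline x}, \mathcal A_{\overline y}) < 4\epsilon$. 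I expect the bookkeeping of the constant $4$ — tracking exactly where the factors of $2$ enter (from $\norm{\pi} \le 2$ and from the two-sided nature of the Kadison--Kastler distance) — to be the delicate part, while the conceptual core is simply the coordinatewise matrix bound $\norm{s^{\overline x}_\alpha - s^{\overline y}_\alpha} = |x_\alpha - y_\alpha|$.
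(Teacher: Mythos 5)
Your proposal is correct and follows essentially the same route as the paper's proof: approximate a norm-one element of $\mathcal A_{\overline x}$ by a finite combination with coefficients bounded by $1$, replace each $M_{x_\alpha}$ block by the corresponding $M_{y_\alpha}$ block while copying the element exactly on the finitely many coordinates where the $A_{\alpha_k}$ overlap (so that, above that threshold, almost-disjointness guarantees only one block changes per coordinate, giving a coordinatewise bound of $\epsilon$ rather than $N\epsilon$), then rescale and use symmetry. The key matrix estimate $\norm{s^{\overline x}_\alpha - s^{\overline y}_\alpha} = \abs{x_\alpha - y_\alpha}$ and the bookkeeping you describe are exactly what the paper does.
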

In particular, if $x_\alpha=z$ for all $\alpha\in\aleph_1$ and a fixed $z\in[0,1]$, the algebra $\mathcal A_{\overline x}$ is isomorphic to a $C^*$-algebra (if $z=0$ it is a $C^*$-algebra itself), since it is always possible to unitarize $\left(\begin{array}{cc}
1 & 0 \\
x_\alpha & -1 \\\end{array}\right)$. The thesis of Theorem \ref{thethesis2} follows as a consequence of the existence of a complete accumulation point for any uncountable subset of $[0,1]$.
\begin{proof}
Let \[s^0_{\alpha}=\left(\begin{array}{cc}
1 & 0 \\
x_\alpha & -1 \\\end{array}\right)\text{ and }s_\alpha^1=\left(\begin{array}{cc}
1 & 0 \\
y_\alpha & -1 \\\end{array}\right)\] and let $p_\alpha^l$ be defined as \[(p_\alpha^l)=\begin{cases}s_\alpha^l & \text{ if } n\in A_\alpha\\
I&\text{otherwise.}
\end{cases}\] for $l=0,1$. By hypothesis $s_\alpha^0\sim_\epsilon s_\alpha^1$ for all $\alpha$.

Let $a\in (A_{\overline x})_1$. Up to $\epsilon$ we can assume that $a$ has finite support. This means that we may assume that there are $n\in\en$,  $\alpha_1,\ldots,\alpha_n$ and $c_{k}$ for $0\leq k\leq n$ such that
\[Q(a)\sim_{\epsilon/2}c_{0}I_{\mathcal C}+\sum_{1\leq k\leq n}c_kQ(p_{\alpha_k}^0)\] with $\abs{c_k}\leq 1$. 

We know that there is $\overline n$ such that for all $m\geq\overline n$ we have $(a)_m\sim_{\epsilon} c_0\cdot I+\sum_{1\leq k\leq n}c_kp_{\alpha_k}^0$.

Let $b$ be defined as $(b)_m=(a)_m$ for \[m\leq\max\{\overline n,\max\bigcup_{1\leq i<j\leq n}A_{\alpha_i}\cap A_{\alpha_j}\}\] and $(b)_m= c_0\cdot I+\sum_{1\leq k\leq n}c_kp_{\alpha_k}^1$ otherwise. Then $b\in\mathcal A_{\overline y}$, and since \[\abs{c_k}\leq 1\text{ and }s_{\alpha_k}^0\sim_{\epsilon}s_{\alpha_k}^1\] we have $a\sim_\epsilon b$. Moreover we have that $\norm{b}\sim_\epsilon 1$, since the norm in $\ell_\infty$ is the sup norm, so $a\sim_{2\epsilon}\frac{b}{\norm{b}}$.
\end{proof}
As a concluding remark we should point out some consequences on the structure of the Kadison-Kastler metric in the set of Banach subalgebras of $\mathcal B(H)$, where $H$ is separable. In order to extend some of the results in \cite{choi1983completely} and as a consequence of Theorem \ref{thethesis2} we have that neither of the sets \[C_*=\{A\subseteq\mathcal B(H)\mid A\text{ is a $C^*$-algebra}\}\] and \[C_{\sim}=\{A\subseteq \mathcal B(H)\mid A\text{ is isomorphic to a $C^*$-algebra}\}\] are open in Kadison-Kastler metric. 
We should point out, thanking Stuart White for the observation, that the fact that $C_*$ is not open follows easily considering $M_2(\ce)$, \[A=\{\left(\begin{array}{cc}
\alpha & 0 \\
0 & \beta \\\end{array}\right)\mid\alpha,\beta\in\ce\},\,\,\,v=\left(\begin{array}{cc}
1 & 0 \\
\epsilon i & 1 \\\end{array}\right)\] and $B=vAv^{-1}$. Then $A$ and $B$ are $2\epsilon$-close in KK-metric, yet $B$ is not a $C^*$-algebra, since $v\in B$ but $v^*\notin B$.

The existence of a separable amenable operator algebra that is not isomorphic to a $C^*$-algebra is however still open. This means that it is not known whether $C_{\sim}$ is open in the subspace topology when intersected with the set of all separable amenable algebras. The fact that the set of separable operator algebras is clopen in the set of all operator algebras (see \cite[Prop 2.10]{perturb} for the nontrivial direction), suggests that having information on what is happening in the nonseparable case will not help to describe the situation in the separable one.

We note also that, for a fixed $\{x_\alpha\}\subseteq (x-\epsilon,x+\epsilon)$, any permutation  of $\aleph_1$ induces the construction of a different, non-isomorphic to a $C^*$-algebra, algebra, that is $8\epsilon$-close, in Kadison-Kastler metric, to the same algebra isomorphic to a $C^*$-algebra. Hence for every $\epsilon>0$ there are $2^{\aleph_1}$ algebras that are amenable, nonseparable, close to each other and each of those is close to a an algebra isomorphic to a $C^*$-algebra. If we fix the set of points $\{x_\alpha\}$ in order to have $0$ as a complete accumulation point of $\{x_\alpha\}$, we can say that each of those $2^{\aleph_1}$ many algebras is $\epsilon$-close to a $C^*$-algebra but not isomorphic to one itself.

Lastly, we want to emphasize how the result contained in \cite{Vasilescu} relates to this problem: as was noted in \cite{CFO13}, the existence of a separable amenable subalgebra of $\mathcal B(H)$ that is not isomorphic to a $C^*$-algebra is equivalent to the existence of such an object inside $\prod M_n$, that is a finite von Neumann algebra. In particular, using the fact that any bounded group of invertible elements inside $\prod M_n$ is similar to a group of unitaries, we have the following
\begin{Cor}
Let $A\subseteq\prod M_n$ be a Banach algebra. Then the following conditions are equivalent:
\begin{itemize}
\item $A$ is isomorphic to a $C^*$-algebra;
\item $A$ is similar to a $C^*$-algebra;
\item There is a uniformly bounded group $G\subseteq GL(A)$ such that $A=\overline\spann G$.
\end{itemize}
\end{Cor}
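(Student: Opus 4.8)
The plan is to prove the cycle of implications ``similar $\Rightarrow$ isomorphic $\Rightarrow$ spanned by a uniformly bounded group $\Rightarrow$ similar'', since this establishes all three equivalences at once. Throughout I treat $A$ as unital, sharing the unit of $\prod M_n$: the third condition forces this, because the identity $e$ of a group $G\subseteq GL(A)$ satisfies $e=gg^{-1}=1_{\prod M_n}$ for any $g\in G$, so $1_{\prod M_n}=e\in A$; and in the remaining directions I use unitality of the ambient $C^*$-algebra.

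The implication ``$A$ similar to a $C^*$-algebra $\Rightarrow$ $A$ isomorphic to a $C^*$-algebra'' is immediate, as conjugation $x\mapsto uxu^{-1}$ by the implementing invertible $u$ is a Banach-algebra isomorphism. For ``$A$ isomorphic to a $C^*$-algebra $\Rightarrow$ (third condition)'', fix a Banach-algebra isomorphism $\Phi\colon A\to D$ onto a $C^*$-algebra $D$; by the open mapping theorem both $\Phi$ and $\Phi^{-1}$ are bounded. Since $A$ is unital so is $D$, and $D=\overline\spann\,\mathcal U(D)$ because every element of a unital $C^*$-algebra is a finite linear combination of unitaries. Set $G=\Phi^{-1}(\mathcal U(D))$. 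As $\Phi^{-1}$ is multiplicative and $\mathcal U(D)$ is a group of invertibles of norm one, $G$ is a group of invertible elements of $A$ with $\norm{g}\leq\norm{\Phi^{-1}}$ for all $g\in G$, hence uniformly bounded; and applying the bounded linear bijection $\Phi^{-1}$ to $D=\overline\spann\,\mathcal U(D)$ gives $A=\overline\spann\,G$.

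For the closing implication ``(third condition) $\Rightarrow$ $A$ similar to a $C^*$-algebra'', let $G\subseteq GL(A)$ be uniformly bounded with $A=\overline\spann\,G$. By the main result of \cite{Vasilescu}, every uniformly bounded group of invertibles in $\prod M_n$ is similar to a group of unitaries, so there is an invertible $u\in\prod M_n$ with $uGu^{-1}\subseteq\mathcal U$. The closed span $\overline\spann\,(uGu^{-1})$ is then a $C^*$-algebra: it is self-adjoint since $v^*=v^{-1}\in uGu^{-1}$ for each $v\in uGu^{-1}$, and it is multiplicatively closed since $uGu^{-1}$ is a group, so it is a norm-closed $*$-subalgebra of $\prod M_n$. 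Finally, since $x\mapsto uxu^{-1}$ is a linear homeomorphism, $uAu^{-1}=u\,(\overline\spann\,G)\,u^{-1}=\overline\spann\,(uGu^{-1})$, which is the desired $C^*$-algebra.

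The substance of the argument is carried entirely by the unitarization theorem of \cite{Vasilescu}, valid because $\prod M_n$ is a finite von Neumann algebra; everything else reduces to the standard fact that a unital $C^*$-algebra is the closed span of its unitaries and to the verification that the closed span of a group of unitaries is again a $C^*$-algebra. Accordingly I do not expect a genuine obstacle here, the only care being the bookkeeping around unitality noted above.
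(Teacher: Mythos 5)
Your proof is correct and takes essentially the same route as the paper, which derives the corollary exactly as you do: pull back the unitaries of a $C^*$-algebra through a bounded isomorphism to get a uniformly bounded group spanning $A$ (the same device used in the paper's proof of Theorem \ref{thethesis}), apply the unitarization theorem of \cite{Vasilescu} in the finite von Neumann algebra $\prod M_n$, and note that the closed span of a group of unitaries is a $C^*$-algebra. The only blemish is your justification of unitality: for $g\in GL(A)$ one only gets $gg^{-1}=1_A$, which a priori could be a proper idempotent of $\prod M_n$ rather than its unit, but this convention is left equally implicit in the paper and does not affect the argument.
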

Hence the existence of an amenable separable operator algebra not isomorphic to a $C^*$-algebra is equivalent to the existence of a separable $A\subseteq\prod M_n$ such that $A$ cannot be generated (as a Banach space) by a uniformly bounded group of invertible elements.

\bibliographystyle{amsplain}
\bibliography{NotUnitarizableRepresentation}
\end{document}